\newtheorem{lemma}{Lemma}
\newtheorem*{euler}{Euler's Identity}
\newtheorem*{symmetry}{Symmetry}
\newtheorem*{main}{Main Theorem}
\newtheorem{proposition}{Proposition}
\theoremstyle{definition}
\newtheorem*{definition}{Definition}
\newtheorem*{properties}{Properties of anticontinuants}
\newtheorem{example}{Example}
\theoremstyle{remark}
\newtheorem*{remark}{Remark}
\newtheorem*{convention}{Convention}
\DeclareMathOperator{\len}{\mathrm{length}}
\newcommand{\OL}[1]{{\overleftarrow{#1} }}
\newcommand{\OR}[1]{{\overrightarrow{#1}}}
\newcommand{\ORfrak}{\OR{\mathfrak{q}}}
\newcommand{\OLfrak}{\OL{\mathfrak{q}}}
\begin{document}

\title{End-symmetric continued fractions and quadratic congruences}

\author{Barry Smith}
\email{barsmith@lvc.edu}
\address{Department of Mathematical Sciences \\ Lebanon Valley College\\
Annville, PA, USA}
\subjclass[2000]{Primary 11E25; Secondary 11A05}
\keywords{Euclidean algorithm, continued fraction, continuant}

\begin{abstract}
We show that for a fixed integer $n \neq \pm2$, the congruence $x^2 + nx \pm 1 \equiv 0 \pmod{\alpha}$ has the solution $\beta$ with $0 < \beta < \alpha$ if and only if $\alpha/\beta$ has a continued fraction expansion with sequence of quotients having one of a finite number of possible \emph{asymmetry types}.  This generalizes the old theorem that a rational number $\alpha/\beta > 1$ in lowest terms has a symmetric continued fraction precisely when $\beta^2 \equiv \pm 1 \pmod{\alpha}$.  
\end{abstract}

\maketitle 

\section{Introduction}\label{S:intro}

%A rational number $\alpha/\beta$ can be expanded in a finite continued fraction
%\begin{equation*}
%	\frac{\alpha}{\beta} = q_0 + \cfrac{1}{q_1 + \cfrac{1}{q_2 + \cfrac{1}{\ddots \, + \cfrac{1}{q_{s-1}}}}}
%\end{equation*}
%Suppose $\alpha \geq \beta \geq 1$ are positive integers.  We may arrange that $q_{s-1} \geq 2$, in which case $q_0, \ldots, q_{s-1}$ is the sequence of quotients produced when the positive integers $\alpha$ and $\beta$ are input to the Euclidean algorithm.  Conversely, given such a sequence of quotients, there is a unique pair of relatively prime positive integers that when input to the Euclidean algorithm produces the given quotients.  They are the numerator and denominator when the above continued fraction is simplified to a fraction in simplest terms.  

From relatively prime positive integers $\alpha > \beta$, we may form a simple continued fraction
\begin{equation*}
	\frac{\alpha}{\beta} = q_0 + \cfrac{1}{q_1 + \cfrac{1}{q_2 + \cfrac{1}{\ddots \, + \cfrac{1}{q_{s-1}}}}}
\end{equation*}
The positive integers $q_0$, \ldots $q_{s-1}$ can be computed as the quotients when the Euclidean algorithm is performed with $\alpha$ and $\beta$.  

Symmetric expansions like
\begin{equation*}
	\frac{25}{7} = 3 + \cfrac{1}{1 + \cfrac{1}{1 + \cfrac{1}{3}}}
\end{equation*}
naturally draw attention and have been studied since the mid-nineteenth century \cite{cH1848} \cite{jS1848} \cite{hS1855}. These early studies %Serret, for example
noted the following are equivalent:
\begin{enumerate}
	\item  $\beta^2 \equiv (-1)^{s+1} \pmod{\alpha}$ \label{L:congruence}
	\item  The quotient sequence is symmetric\footnote{when chosen appropriately -- see the bottom of p.2}, i.e., $q_0 = q_{s-1}$, $q_1 = q_{s-2}$, \ldots, \label{L:symmetry}
\end{enumerate}
Above, we see that $25/7$ has symmetric continued fraction expansion of even length, and we can readily confirm that $7^2 \equiv -1 \pmod{25}$.

The nominal goal of the first studies of symmetric continued fractions \cite{cH1848} \cite{jS1848} \cite{hS1855} was providing new constructive proofs that every prime congruent to $1$ modulo $4$ can be written as a sum of two squares.  Later works  \cite{jB1972} \cite{gC1908} \cite{HMW1990} focused exclusively on computation, crafting from the Euclidean algorithm procedures for representing numbers by quadratic forms.  Symmetry is notably absent.

Symmetry in continued fractions has arisen recently \cite{hC1996} \cite{DFP1982} \cite{jS1979} \cite{aP2001} in a different constellation of ideas centered around ``folded'' continued fractions.  These developments study mostly infinite continued fractions whose convergents have a form of iterated symmetry and explain the fantastic continued fraction expansions of certain ``naturally occurring'' irrational numbers.  They seem not to overlap much with the present work, with one significant exception to be indicated in Section \ref{S:looseends}.

The present work generalizes the equivalence of \eqref{L:congruence} and \eqref{L:symmetry}.  We begin by giving a flavor of the types of symmetry provided by the Main Theorem.

% and $\OR{q}$ denotes the sequence of quotients of the\footnote{There are two simple continued fraction expansions, one with final quotient equal to 1 and the other with final quotient greater than 1.  We will always choose the final quotient, when possible, so that the final quotient equals the initial one.} simple continued fraction expansion of $\alpha/\beta$

\begin{example}\label{Ex:simple}
The following are equivalent:
\begin{enumerate}
	\item $\beta$ is a root of one of the congruences $x^2  \pm x + 1 \equiv 0 \pmod{\alpha}$
	\item $\alpha/\beta$ has a continued fraction expansion with quotient sequence of the form 
\begin{equation*}
	\begin{matrix}
		q_0, & \ldots & q_{s-1}, & \textcolor{magenta}{q_s \pm 1}, & \textcolor{magenta}{q_s}, & q_{s-1}, & \ldots & q_0
	\end{matrix}
\end{equation*}
\end{enumerate}
Also, the following are equivalent:
\begin{enumerate}
	\item $\beta$ is a root of one of the congruences $x^2  \pm x - 1 \equiv 0 \pmod{\alpha}$
	\item $\alpha/\beta$ has a continued fraction expansion with quotient sequence of the form
\begin{equation*}
	\begin{matrix}
		q_0, & \ldots & q_{s-1}, & \textcolor{magenta}{q_s \pm 1}, & \textcolor{magenta}{1},   &\textcolor{magenta}{q_s}, & q_{s-1}, & \ldots & q_0
	\end{matrix}
\end{equation*}	
\end{enumerate}
And the following are equivalent:
\begin{enumerate}
	\item $\beta$ is a root of one of the congruences $x^2  \pm 3 x + 1 \equiv 0 \pmod{\alpha}$
	\item $\alpha/\beta$ has a continued fraction expansion with quotient sequence of the form
\begin{equation*}
	\begin{matrix}
		q_0, & \ldots & q_{s-1}, & \textcolor{magenta}{q_s \pm 3},  &\textcolor{magenta}{q_s}, & q_{s-1}, & \ldots & q_0
	\end{matrix}
\end{equation*}	
\end{enumerate}
In all three cases, the sign used in (1) will be the same as that used in (2) when $s$ is even and the opposite when $s$ is odd.
\end{example}

These are the simplest cases of the Main Theorem.  We see in each that $\beta$ is a root of a certain quadratic congruence modulo $\alpha$ precisely when it has a continued fraction expansion with ``end-symmetric'' sequence of quotients, that is, symmetric outside of an asymmetric core with a particular form (in magenta).  

Generally, the asymmetric core will have one of a finite number of possible forms.  Also, we must exclude a finite set of exceptional $\alpha$.

\begin{example}\label{Ex:complex}
Suppose that $\alpha \neq 2, 3$.  Then the following are equivalent:
\begin{enumerate}
	\item $\beta$ is a root one of of the congruences $x^2 \pm 4 x + 1 \equiv 0 \pmod{\alpha}$
	\item $\alpha/\beta$ has a continued fraction expansion with quotient sequence having one of the forms 
\begin{equation*}
	\begin{matrix}
		& q_0, & \ldots & q_{s-1}, & \textcolor{magenta}{q_s \pm 4},  &\textcolor{magenta}{q_s}, & q_{s-1}, & \ldots & q_0 \\
		q_0, & \ldots & q_{s-1}, & \textcolor{magenta}{q_s \pm 2}, & \textcolor{magenta}{1}, & \textcolor{magenta}{1},   &\textcolor{magenta}{q_s}, & q_{s-1}, & \ldots & q_0\\
		q_0, & \ldots & q_{s-1}, & \textcolor{magenta}{q_s + 1}, & \textcolor{magenta}{1}, & \textcolor{magenta}{2},   &\textcolor{magenta}{q_s}, & q_{s-1}, & \ldots & q_0\\
		q_0, & \ldots & q_{s-2}, & \textcolor{magenta}{q_s - 1}, & \textcolor{magenta}{2}, & \textcolor{magenta}{1},   &\textcolor{magenta}{q_s}, & q_{s-1}, & \ldots & q_0\\
	\end{matrix}
\end{equation*}
\end{enumerate}
\end{example}
We must exclude the cases $\alpha=2$ and $3$, since in the first case $\beta = 1$ and in the second case $\beta=1$ and $\beta = 2$ are roots of $x^2 \pm 4x + 1 \equiv 0 \pmod{\alpha}$, but $\alpha/\beta$ cannot be expanded as a continued fraction with one of the specified forms.

The above examples inspire the following terminology, which will facilitate the statement of the main theorem.

\begin{definition}
A finite asymmetric sequence $\OR{q}$ of positive integers can be uniquely written in the form 
\begin{equation*}
	\begin{matrix}
		q_0, & q_1, & \ldots & q_{s-1}, & q_s + (-1)^s c,  & \OR{x}, & q_s, & q_{s-1}, & \ldots & q_1, & q_0
	\end{matrix}
\end{equation*}
in which $c$ is a nonzero integer, $s$ is a nonnegative integer, and $\OR{x}$ is a sequence of positive integers.    We will say that $\OR{q}$ has \textbf{asymmetry type} $(c\phantom{c} ; \OR{x})$. A symmetric sequence of even length has asymmetry type $(0\phantom{c} ; \phantom{c})$, and a symmetric sequence of odd length with middle entry $x$ has asymmetry type $(0\phantom{c} ; \phantom{l}x)$.
%We will call the subsequence $\OR{x}$ the \textbf{core asymmetry} of $\OR{q}$, the nonzero integer $c$ the \textbf{marginal asymmetry} of $\OR{q}$, and the integer $s$ the \textbf{symmetry length} of $\OR{q}$.  
\end{definition}

We must address how continued fraction expansions are chosen.  The numbers $\alpha/\beta$ we consider actually all have two continued fraction expansions, one with final quotient $1$ and the other with final quotient $\geq 1$.  For instance, in the example above, 25/7 can be expanded as the continued fraction with sequence of quotients $3$, $1$, $1$, $3$ or as that with sequence of quotients $3$, $1$, $1$, $2$, $1$.  We use the following convention throughout this work:

\begin{convention}
When a rational number is expanded as a simple continued fraction, the continued fraction will be chosen so that its final coefficient is 1 if and only if its initial quotient is 1.
\end{convention}

\begin{main}\label{T:main}
Fix integers $n$ and $s$ with $s=0$ or $1$.  When $s=0$, assume also that $n \neq \pm 2$.  Then there is a finite set $S$ of asymmetry types such that for all positive integers $\alpha$ outside of a finite set, the congruence
\begin{equation*}
	x^2 + nx + (-1)^s \equiv 0 \pmod{\alpha},
\end{equation*}
has the solution $\beta$ with $0 < \beta < \alpha$ if and only if the sequence of quotients of the simple continued fraction expansion of $\alpha/\beta$ has asymmetry type in $S$. 
\end{main}

The proof exploits properties of a special type of expression built from continuants.  We call these expressions anticontinuants and develop their properties in Section \ref{S:continuants}.  Section \ref{S:proof} is devoted to the proof of the Main Theorem.  Section \ref{S:looseends} discusses the exceptional case $n=2$ and $s=0$ in detail and includes a table of the possible asymmetry types that arise from the Main Theorem for small values of $n$.

\section{Continuants and anticontinuants}\label{S:continuants}

From a finite sequence of positive integers $\OR{q} = (q_0, \ldots, q_{s-1})$, we may compute a doubly indexed collection of numbers called \emph{continuants}.  

\begin{definition}
For $0 \leq i \leq j+2 \leq s+1$, we define the continuants $\ORfrak_{\! i,j}$ recursively:
\begin{equation*}
	\ORfrak_{\! i,i-2} = 0, \qquad \ORfrak_{\! i,i-1} = 1, \qquad \ORfrak_{\! i,j} = q_j \ORfrak_{\! i,j-1} + \ORfrak_{\! i,j-2} \quad \text{ for $j=i, \ldots, s-1$}    
\end{equation*}
When a more explicit description of the $q_i$'s is required, we will use the alternative notation:
\begin{equation*}
	\left[ q_i, \ldots, q_j \right] := \ORfrak_{\! i,j}\\[0.1cm]
\end{equation*}
\end{definition}

The connection with continued fractions is that if $\alpha/\beta$ has continued fraction expansion with sequence of quotients $q_0, \ldots, q_{s-1}$ and $\gcd(\alpha,\beta) = 1$, then 
\begin{equation}\label{E:numdenom}
	\alpha = \ORfrak_{\! 0,s-1} \qquad \text{and} \qquad \beta = \ORfrak_{\! 1,s-1}
\end{equation}
This is because the $\ORfrak_{\! i,s-1}$ satisfy the same recursion as the remainders in the Euclidean algorithm with $\alpha$ and $\beta$, starting with the final step and working backward.

Useful properties of continuants include
\begin{symmetry}
\begin{equation*}
	\left[ q_i, \ldots, q_j \right] = \left[ q_j, \ldots, q_i \right]
\end{equation*}
\end{symmetry}
\noindent and the remarkable
\begin{euler}
For $0 \leq k \leq  l \leq m+2$ and $m \leq n \leq s-1$,
\begin{align*}
	\ORfrak_{\! k,n} \ORfrak_{\! l,m} - \ORfrak_{\! k,m} \ORfrak_{\! l,n} = (-1)^{l+m+1} \ORfrak_{\! k,l-2} \, \ORfrak_{\! m+2,n}
\end{align*}
\end{euler}

Many proofs of these properties are known.  A streamlined method that proves both symmetry and Euler's identity simultaneously involves viewing the continuant $\ORfrak_{\! i,j}$ as the number of tilings by certain stackable tiles of a 1-dimensional board \cite{BQ2000}.

%Euler's identity specializes to several useful identities, including:
%\begin{subequations}
%\begin{align}
%	\mathfrak{c}_{k,l-2} &= (-1)^{l+n} ( \mathfrak{c}_{k, n} \mathfrak{c}_{l, n-1} - \mathfrak{c}_{k,n-1} \mathfrak{c}_{l,n})  &&\text{when $m=n-1$} \label{E:zerothEulereq}\\
%	\mathfrak{c}_{k,n} &= \mathfrak{c}_{k,m} \mathfrak{c}_{m+1,n} + \mathfrak{c}_{k,m-1} \mathfrak{c}_{m+2,n}  &&\text{when $l=m+1$}\label{E:firstEulereq}\\
%	\mathfrak{c}_{k,n} &= u_k \mathfrak{c}_{k+1,n} + \mathfrak{c}_{k+2,n}  &&\text{setting $m=k$ in  \eqref{E:firstEulereq}}\label{E:secondEulereq}\\
%	\mathfrak{c}_{k,n} &=  u_{n} \mathfrak{c}_{k,n-1} + \mathfrak{c}_{k,n-2}    &&\text{setting $m=n-1$ in  \eqref{E:firstEulereq}}\label{E:thirdEulereq}
%\end{align}
%\end{subequations}
%Equation \eqref{E:thirdEulereq} reveals that continuants can also be defined recursively by
%\begin{equation*}
%	\mathfrak{c}_{i,i-2} = 0, \, \, \,  \mathfrak{c}_{i,i-1} = 1, \quad \text{ and } \quad \mathfrak{c}_{i,k} = u_k \mathfrak{c}_{i,k-1} + \mathfrak{c}_{i,k-2} \, \,  \text{ for $i \leq k \leq s-1$}
%\end{equation*}

We could have defined continuants differently.  For instance, each is a polynomial in the numbers $q_0, \ldots, q_{s-1}$, and Euler gave an explicit description of the terms that appear.  Let us examine a few cases to get the general idea:
\begin{align*}
	\left[ q_0, q_1 \right] &= q_0 q_1 + 1\\
	\left[ q_0, q_1, q_2 \right] &= q_0 q_1 q_2 + q_0 + q_2\\
	\left[ q_0, q_1, q_2, q_3 \right] &= q_0 q_1 q_2 q_3 + q_0 q_1 + q_2 q_3 + q_0 q_3 + 1
\end{align*}
Euler observed that each term appearing in $\left[q_0, \ldots, q_{s-1} \right]$ may be produced by starting with the product $q_0 q_1 \cdots q_{s-1}$ and deleting some pairs of factors whose subscripts are consecutive integers. It can be shown that the number of such terms is $F_{s+1}$, the $s+1$st Fibonacci number, so $F_{s+1}$ is the minimal value for a continuant of length $s$ with positive integer entries.\\

We will consider expressions $\left[ q_{i}, \ldots, q_{j-1} \right] - \left[ q_{i+1}, \ldots, q_{j} \right]$ often enough that it is convenient to give them their own name and notation\footnote{The author regrets that the word ``alternant'' is already taken}.

\begin{definition}  
Given a sequence of integers $\OR{q} = (q_0, \ldots, q_{s-1})$, the \textbf{anticontinuant} $\ORfrak_{\! i,j}^{\star}$ is defined for $0 \leq i \leq j+1 \leq s$ by
\begin{equation*}
	\ORfrak_{\! i,j}^{\star} :=  \ORfrak_{\! i,j-1} - \ORfrak_{\! i+1,j} 
\end{equation*}
We also use the notations 
\begin{equation*}
	\left[ q_i, \ldots, q_j \right]^{\star} := \ORfrak_{\! i,j}^{\star} \quad \text{ and } \quad \left[ \OR{q} \right]^{\star} := \ORfrak_{\! 0,s-1}^{\star}
\end{equation*}
The anticontinuant of the sequence in reverse will be denoted
\begin{equation*}
	\OLfrak_{\! j,i}^{\star} := \left[ q_j, \ldots, q_i \right]^{\star} 
\end{equation*}
\end{definition}

When we combine the symmetry of continuants with Equation \eqref{E:numdenom}, we have the perhaps surprising observation that the continued fractions with sequences of quotients $q_0, \ldots, q_{s-1}$ and $q_{s-1}, \ldots, q_0$ have the same numerator (in lowest terms).  Then $\left| \ORfrak_{\! 0,s-1}^{\star} \right|$ is the distance between their denominators.

Anticontinuants have a recursive description that can be derived from that of continuants:
\begin{equation}\label{E:anticontrecursion}
	\ORfrak_{\! i,i-1}^{\star} = 0, \, \, \, \ORfrak_{\! i,i}^{\star} = 0, \quad \text{ and } \quad  \ORfrak_{\! i,j}^{\star} =  (q_i - q_j) \cdot \ORfrak_{\! i+1,j-1}- \ORfrak_{\! i+1,j-1}^{\star} \text{ for $i< j \leq s-1$}
\end{equation}

Explicit expressions for the first few nontrivial anticontinuants are
\begin{align*}
	\left[ q_0, q_1 \right]^{\star} &= q_0 - q_1\\
	\left[ q_0, q_1, q_2 \right]^{\star} &= q_0 q_1 - q_1 q_2\\
	\left[ q_0, q_1, q_2, q_3 \right]^{\star}  &= q_0 - q_1 + q_2 - q_3 + q_0 q_1 q_2 - q_1 q_2 q_3\\
	\left[ q_0, q_1, q_2, q_3, q_4 \right]^{\star}  &= q_0 q_1 - q_1 q_2 + q_2 q_3 - q_3 q_4 + q_0 q_3 - q_1 q_4 + q_0 q_1 q_2 q_3 - q_1 q_2 q_3 q_4
\end{align*}

%\noindent We also use the vector notation $\left[ \OR{u}_{\! i,j} \right]$ or just $\left[ \OR{u} \right]$.  Then $\left[ \OL{u}_{\! j,i} \right]$ and $\left[ \OL{u} \right]$ will denote the continuant obtained by reversing the entries in $\OR{u}_{\! i,j}$.  Thus, for example, from a sequence $\OR{u}$ we may construct the symmetric continuants $\left[ \OR{u}, \OL{u} \right]$ and $\left[ \OL{u}, \OR{u} \right]$. We also make the convention that that $\OR{q}_{\! i+1,i}$ and $\OL{q}_{\! i,i+1}$ denote the empty sequence of length 0.

We note the immediate properties:
\begin{properties} \hfill

\renewcommand{\theenumi}{\arabic{enumi}}
\begin{enumerate}
	\item  $[ q_0, \ldots, q_{s-1}, \OR{x}, q_{s-1}, \ldots, q_0]^{\star} = (-1)^{s} \left[ \OR{x} \right]^{\star}$.
	\item $\OLfrak_{\! j,i}^{\star} = - \ORfrak_{\! i,j}^{\star}$
\end{enumerate}
\end{properties}

Property (1) follows inductively from Equation \eqref{E:anticontrecursion}.  Property (2) can be proved using the symmetry of continuants in the definition of anticontinuant.  

We next involve the notion of asymmetry types defined in Section \ref{S:intro}.   We let $\len(\OR{q})$ be the number of entries in the sequence $\OR{q}$.
\begin{proposition}
Suppose the sequence of positive integers $\OR{q}$ has asymmetry type $(c\phantom{c} ; \OR{x})$.  Then
\begin{enumerate}
\setcounter{enumi}{2}
	\item $\left[ \OR{q} \right]^{\star} = 0$ if and only if $c=0$, i.e., $\OR{q}$ is symmetric
	\item If $\OR{q}$ is not symmetric, then the sign of $\left[ \OR{q} \right]^{\star}$ is the same as the sign of $c$
	\item If $\len(\OR{x}) = \lambda$, then \label{L:anticontinuantsize}
\begin{equation*}
	| c | \, F_{\lambda+1} \, \leq \, \left| \left[ \OR{q} \right]^{\star} \right| %\, \leq \, ?? | c | \, \left[ q \right]
\end{equation*}
where $F_{\lambda+1}$ denotes the $\lambda+1$st Fibonacci number. 
\end{enumerate}
\end{proposition}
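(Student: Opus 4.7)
The plan is to derive a compact formula for $[\OR{q}]^\star$ in terms of $c$, $[\OR{x}]$, and $[\OR{x}]^\star$, and then deduce all three claims from a single size estimate on anticontinuants.

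I would begin by setting $\OR{y} = (q_s + (-1)^s c,\, \OR{x},\, q_s)$ so that $\OR{q}$ has the palindromic shape required by Property~(1), giving $[\OR{q}]^\star = (-1)^s [\OR{y}]^\star$. Expanding $[\OR{y}]^\star = [q_s + (-1)^s c,\,\OR{x}] - [\OR{x},\,q_s]$ via the one-sided recursions $[a,x_1,\ldots,x_\lambda] = a[\OR{x}] + [x_2,\ldots,x_\lambda]$ and $[x_1,\ldots,x_\lambda,b] = b[\OR{x}] + [x_1,\ldots,x_{\lambda-1}]$ (immediate from the defining recursion together with symmetry of continuants), the $q_s$-contributions cancel and the remaining difference is precisely $-[\OR{x}]^\star$. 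Multiplying by $(-1)^s$ gives
\[
[\OR{q}]^\star \;=\; c\,[\OR{x}] \;-\; (-1)^s\,[\OR{x}]^\star.
\]

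The crux is then the estimate $[\OR{x}] \pm [\OR{x}]^\star \geq F_{\lambda+1}$, equivalently $|[\OR{x}]^\star| \leq [\OR{x}] - F_{\lambda+1}$. Writing $[\OR{x}]^\star = P - Q$ with $P = [x_1,\ldots,x_{\lambda-1}]$ and $Q = [x_2,\ldots,x_\lambda]$, the recursion $[\OR{x}] = x_\lambda P + [x_1,\ldots,x_{\lambda-2}]$ yields
\[
[\OR{x}] - (P-Q) \;=\; (x_\lambda-1)P + Q + [x_1,\ldots,x_{\lambda-2}] \;\geq\; F_\lambda + F_{\lambda-1} \;=\; F_{\lambda+1},
\]
since a continuant of length $k$ in positive integers is at least $F_{k+1}$. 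The opposite sign is symmetric via $[\OR{x}] = x_1 Q + [x_3,\ldots,x_\lambda]$, and the small cases $\lambda \leq 1$ are immediate.

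With the formula and estimate in hand, the three properties follow cleanly. For Property~(3), $c=0$ forces $\OR{q}$ to be a palindrome, and then symmetry of continuants gives $[\OR{q}]^\star = 0$; conversely for $c \neq 0$, the estimate yields $|c|[\OR{x}] \geq [\OR{x}] > |[\OR{x}]^\star|$, so the two terms in the formula cannot cancel. The same comparison shows the sign of $[\OR{q}]^\star$ equals that of $c\,[\OR{x}]$, hence that of $c$, proving Property~(4). For Property~(5), the triangle inequality and the estimate give
\[
\bigl|[\OR{q}]^\star\bigr| \;\geq\; |c|[\OR{x}] - |[\OR{x}]^\star| \;\geq\; (|c|-1)[\OR{x}] + F_{\lambda+1} \;\geq\; |c|\,F_{\lambda+1},
\]
using $[\OR{x}] \geq F_{\lambda+1}$. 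The main obstacle is the clean derivation of the estimate $[\OR{x}] \pm [\OR{x}]^\star \geq F_{\lambda+1}$; once that is in place, the rest is bookkeeping.
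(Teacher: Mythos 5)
Your proposal is correct and follows essentially the same route as the paper: the reduction via Property (1) to the core $(q_s+(-1)^sc,\OR{x},q_s)$, the key identity $[\OR{q}]^{\star}=c[\OR{x}]-(-1)^s[\OR{x}]^{\star}$ (which is exactly the paper's expansion $c[x_0,\ldots,x_{\lambda-1}]-[x_0,\ldots,x_{\lambda-2}]+[x_1,\ldots,x_{\lambda-1}]$), and Fibonacci lower bounds on continuants to finish. Your only departure is organizational --- packaging the term-by-term estimates into the single reusable inequality $\left|[\OR{x}]^{\star}\right|\leq[\OR{x}]-F_{\lambda+1}$, from which all three properties drop out uniformly --- which is a slightly cleaner bookkeeping of the same argument.
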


\begin{proof}
Property (1) shows that $\left[ \OR{q} \right]^{\star}=0$ if $\OR{q}$ is symmetric.  Property (3) will follow if we show that asymmetric anticontinuants are nonzero.  Properties (1) and (2) above reduce the proofs of this and Properties (4) and (5) to the case $s=0$.  We have $\left[ q+c, q \right]^{\star} = c$ and $\left[ q+c, x_0, q \right]^{\star} = c x_0$ and the properties are clear in these cases.  So consider an anticontinuant $\left[ q+c, x_0, \ldots, x_{\lambda-1}, q \right]^{\star}$ with $\lambda \geq 2$.

Using the recursion for anticontinuants, Equation \eqref{E:anticontrecursion}, we have
\begin{equation*}
	\left[ q+c, x_0, \ldots, x_{\lambda-1}, q \right]^{\star} = c \left[ x_0, \ldots, x_{\lambda-1} \right] - \left[ x_0, \ldots, x_{\lambda - 2} \right] + \left[ x_1, \ldots, x_{\lambda-1} \right]
\end{equation*}
If $c$ is positive, then the first term is at least the second, so the right side is positive.  If $c$ is negative, then the sum of the first and last terms is negative, so the right side is negative.  This proves Properties (3) and (4).

To prove Property (5) in the case $s=0$, it is enough to consider the case $c > 0$.  Using the recursive for continuants, we may then rewrite the equation in the paragraph above as
\begin{equation}\label{E:boundingeq}
	\left[ q+c, x_0, \ldots, x_{\lambda-1}, q \right]^{\star} = \left( c x_{\lambda-1} - 1 \right) \left[x_0, \ldots, x_{\lambda-2} \right] + c \left[ x_0, \ldots, x_{\lambda-3} \right] + \left[ x_1, \ldots, x_{\lambda-1} \right] 
\end{equation}
Because the minimal value taken by a continuant of length $s$ is $F_{s+1}$, we have
\begin{align*}
	\left[ q+c, x_0, \ldots, x_{\lambda-1}, q \right]^{\star} &\geq (c-1) F_{\lambda} + c F_{\lambda-1} + F_{\lambda} =  c F_{\lambda+1} \qedhere
\end{align*}
\end{proof}

  Because continuants with positive integer entries increase when either an entry is increased or the number of entries increases, the continuants that evaluate to a fixed integer are finite in number. Property (1) shows this is not true for anticontinuants.  We can salvage this property by ignoring symmetric ends of sequences.

\begin{proposition}\label{P:finitetypes}
Let $n \neq \pm2$ be an integer.  The sequences with anticontinuant equal to $n$ have only a finite number of asymmetry types.  The sequences with anticontinuant equal to  $2$ have asymmetry type $(2\phantom{c} ; \phantom{c})$, $(1\phantom{c} ; \phantom{c}2)$, $(2\phantom{c} ; \phantom{c}1)$,  or $(1\phantom{c} ;  \phantom{c}p,1)$ where $p$ is an arbitrary positive integer, and those with anticontinuant equal to $-2$ are their reversals.
\end{proposition}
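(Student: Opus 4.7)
The plan is to leverage the formula for $[\OR{q}]^\star$ in terms of the asymmetry type $(c;\OR{x})$ and the parity of the outer-symmetry index $s$. Applying Property (1) together with the anticontinuant recursion \eqref{E:anticontrecursion} and then expanding $[\OR{x}]$ via the continuant recursion at its right-hand end, I would derive the identity
\begin{equation*}
 [\OR{q}]^\star \; = \; \bigl(cx_{\lambda-1} - (-1)^s\bigr)[x_0,\ldots,x_{\lambda-2}] + c\,[x_0,\ldots,x_{\lambda-3}] + (-1)^s[x_1,\ldots,x_{\lambda-1}],
\end{equation*}
which coincides with the displayed equation \eqref{E:boundingeq} from the proof of Property (5) when $c > 0$ and $s = 0$. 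Property (5) already bounds $|c|$ and $\lambda = \len(\OR{x})$ in terms of $|[\OR{q}]^\star| = |n|$, so the remaining task is to bound the entries of $\OR{x}$.

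I would first handle the case $c > 0$ with $s$ even, where all three summands of the displayed identity are nonnegative and sum to $n$. Each summand is therefore at most $n$. The last summand is the continuant $[x_1,\ldots,x_{\lambda-1}]$, which bounds $x_1,\ldots,x_{\lambda-1}$ (and $\lambda - 1$) since a continuant with positive integer entries dominates both its length and each of its entries. When $\lambda \geq 3$, the middle summand does the same for $x_0,\ldots,x_{\lambda-3}$, so all entries of $\OR{x}$ are bounded and only finitely many asymmetry types arise. The cases $\lambda \in \{0,1\}$ give $[\OR{q}]^\star = c$ and $[\OR{q}]^\star = c x_0$, contributing finitely many types for any $n$ and producing exactly $(2;)$, $(1;2)$, and $(2;1)$ when $n = 2$. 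The borderline case $\lambda = 2$ reduces to the Diophantine equation $(cx_1 - 1)x_0 + c + x_1 = n$; the coefficient $cx_1 - 1$ of $x_0$ is nonnegative, and when positive it bounds $x_0$. It vanishes only when $c = x_1 = 1$, in which case the equation collapses to $c + x_1 = n$, forcing $n = 2$, and $x_0 = p$ becomes a free parameter. This yields the infinite family $(1;p,1)$ precisely when $n = 2$.

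The remaining sign combinations reduce to the one above via Property (2): reversing $\OR{q}$ negates the anticontinuant, flips the sign of $c$, and reverses $\OR{x}$, so the classification for $[\OR{q}]^\star = -n$ follows from the one for $+n$ by reversal. A parallel expansion of $\OR{x}$ from its left-hand end, or equivalently a mirror of the foregoing argument, handles the $s$-odd subcase and accounts for any companion infinite family appearing for $n = \pm 2$. The main obstacle I anticipate is managing the four combinations of signs of $c$ and $(-1)^s$ without double-counting or overlooking a case, particularly at the borderline $\lambda = 2$ where the exceptional infinite families live; verifying that $n \neq \pm 2$ precludes all such families and that the listed families exhaust the possibilities for $n = \pm 2$ is the crux of the argument.
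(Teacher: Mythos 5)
Your proposal follows the paper's own route almost exactly: the same key identity \eqref{E:boundingeq}, the same initial bounding of $c$ and $x_{\lambda-1}$ via the positivity of the three summands, and the same observation that each remaining $x_i$ occurs in a term with positive coefficient except in the degenerate case $\lambda=2$, $c=x_{\lambda-1}=1$, which is exactly where the infinite family for $n=2$ lives. The one genuine difference is that you carry the factor $(-1)^s$ through the computation, whereas the paper invokes Properties (1) and (2) to reduce to $s=0$ at the outset. This is not cosmetic: the paper's reduction, applied to a sequence whose outer-symmetry index $s$ is odd, silently replaces the core $\OR{x}$ by its reversal, so the asymmetry types recovered for odd $s$ are the reversed-core companions of the ones listed. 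Your mirror argument (expanding $[\OR{x}]$ at its left-hand end when $s$ is odd) makes this explicit, and when completed its degenerate case is $c=x_0=1$ with $\lambda = 2$, producing the additional family $(1\phantom{c};\phantom{l}1,p)$ with anticontinuant $2$. That family is absent from the proposition as stated: for instance the sequence $5,2,1,2,3,5$ has anticontinuant $[5,2,1,2,3]-[2,1,2,3,5]=145-143=2$ but asymmetry type $(1\phantom{c};\phantom{l}1,2)$, which is none of $(2\phantom{c} ; \phantom{c})$, $(1\phantom{c} ; \phantom{c}2)$, $(2\phantom{c} ; \phantom{c}1)$, $(1\phantom{c} ;  \phantom{c}p,1)$ nor a reversal thereof. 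So your finiteness argument for $n\neq\pm2$ is sound and coincides with the paper's, but the explicit list for $n=\pm2$ (and likewise the table in Section \ref{S:looseends}) must be enlarged by the reversed cores arising from odd $s$; the anticontinuant is determined by the asymmetry type together with the parity of $s$, not by the type alone. The only incomplete step on your side is that the odd-$s$ mirror case is sketched rather than executed, but the sketch is correct and routine to finish.
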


\begin{proof}
Using properties (1), (2), (3), and (4) of anticontinuants, it is enough to consider a positive integer $n$ and anticontinuants of the form $\left[ q+c, x_0, \ldots, x_{\lambda-1}, q \right]^{\star}$ with $c  > 0$.  We must show there are only finitely many such anticontinuants equal to $n$. Property (5) of anticontinuants shows at least that the $\lambda$'s of such anticontinuants are bounded.

Note that  $\left[ q+c, q \right]^{\star} = c$ and $\left[ q+c, x_0, q \right]^{\star}=c x_0$. Thus, anticontinuants with $\lambda = 0$ or $1$ that evaluate to $n$ have only finitely many asymmetry types.

Now fix $\lambda \geq 2$.  If $c$ and $x_{\lambda-1}$ are both larger than $n$, then each of the three terms on the right side of Equation \eqref{E:boundingeq} is greater than $n$ for every choice of positive integers $x_0, \ldots, x_{\lambda-2}$.  Thus, there are finitely many pairs of positive integers $c$, $x_{\lambda-1}$ for which $\left[ q+c, x_0, \ldots, x_{\lambda-1}, q \right]^{\star} = n$ is possible.  

If $c$ and $x_{\lambda-1}$ is such a pair, then when the right side of Equation \eqref{E:boundingeq} is expanded and considered as a polynomial in $x_0$, \ldots, $x_{\lambda-2}$, all terms appear with positive coefficient.  In addition, when one of the following conditions holds
\begin{itemize}
	\item $\lambda \geq 3$
	\item $\lambda = 2$ and $c \geq 2$
	\item $\lambda = 2$ and $x_{\lambda-1} \geq 2$
\end{itemize}
each of $x_0$, \ldots, $x_{\lambda-2}$ appears as a factor in one of these terms.  If, instead, $\lambda = 2$ and $c=x_{1}=1$, then the anticontinuant has the form $\left[ q+1, x_0, 1, q \right]^{\star}$ which one can directly check is equal to $2$.  

Thus, if $n \neq 2$, then for each of the finitely many $c$ and $x_{\lambda-1}$ for which $\left[ q+c, x_0, \ldots, x_{\lambda-1}, q \right]^{\star}$ can possibly equal $n$, the positive integers $x_0$, \ldots, $x_{\lambda-2}$ can be chosen in only finitely many ways to accomplish this.  Because there are also only finitely many possible lengths $\lambda$, the first statement of the proposition is proved. Property (5) shows that if $\left[ q+c, x_0, \ldots, x_{\lambda-1}, q \right]^{\star} = 2$, then $\lambda \leq 1$.  One can check that the asymmetry types given in the proposition are the only ones producing an anticontinuant equal to $\pm 2$.
\end{proof}

\begin{proposition}\label{P:CFcongruence}
Suppose $\alpha$ and $\beta$ are relatively prime positive integers whose simple continued fraction expansion has sequence of quotients $\OR{q}$.  Then $\beta$ is a root of the quadratic congruence
\begin{equation*}
	x^2 + \left[ \OR{q} \right]^{\star} \, x + (-1)^{\len (\OR{q})} \equiv 0 \pmod{\alpha}
\end{equation*}
\end{proposition}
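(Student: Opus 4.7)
The plan is to reduce the stated congruence to a classical identity between adjacent continuants, namely $\beta \vec{\mathfrak{q}}_{0,s-2} \equiv (-1)^{s+1} \pmod{\alpha}$, which will fall immediately out of Euler's identity.

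First, I would unpack all three quantities appearing in the congruence in terms of continuants of $\vec{q} = (q_0, \ldots, q_{s-1})$. Equation \eqref{E:numdenom} gives $\alpha = \vec{\mathfrak{q}}_{0,s-1}$ and $\beta = \vec{\mathfrak{q}}_{1,s-1}$, while the definition of the anticontinuant gives
\begin{equation*}
    [\vec{q}]^{\star} \;=\; \vec{\mathfrak{q}}_{0,s-2} - \vec{\mathfrak{q}}_{1,s-1} \;=\; \vec{\mathfrak{q}}_{0,s-2} - \beta.
\end{equation*}
Set $\beta' := \vec{\mathfrak{q}}_{0,s-2}$ for brevity. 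Substituting these expressions into $x^2 + [\vec{q}]^{\star} x + (-1)^s$ at $x = \beta$ produces
\begin{equation*}
    \beta^2 + (\beta' - \beta)\beta + (-1)^s \;=\; \beta\beta' + (-1)^s,
\end{equation*}
so the claim reduces to showing $\beta \beta' \equiv (-1)^{s+1} \pmod{\alpha}$.

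Next I would apply Euler's Identity with indices $k=0$, $l=1$, $m=s-2$, $n=s-1$. This is an allowed choice (it satisfies $0 \leq k \leq l \leq m+2$ and $m \leq n \leq s-1$), and the identity becomes
\begin{equation*}
    \vec{\mathfrak{q}}_{0,s-1}\, \vec{\mathfrak{q}}_{1,s-2} \;-\; \vec{\mathfrak{q}}_{0,s-2}\, \vec{\mathfrak{q}}_{1,s-1} \;=\; (-1)^{s}\,\vec{\mathfrak{q}}_{0,-1}\,\vec{\mathfrak{q}}_{s,s-1} \;=\; (-1)^{s},
\end{equation*}
using the base cases $\vec{\mathfrak{q}}_{0,-1} = \vec{\mathfrak{q}}_{s,s-1} = 1$. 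In our notation this reads $\alpha\,\vec{\mathfrak{q}}_{1,s-2} - \beta'\beta = (-1)^{s}$, which immediately gives $\beta'\beta \equiv -(-1)^{s} = (-1)^{s+1} \pmod{\alpha}$. Combined with the reduction above, this yields $\beta^2 + [\vec{q}]^{\star}\beta + (-1)^s \equiv 0 \pmod{\alpha}$.

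There is really no obstacle here beyond bookkeeping: once the anticontinuant is rewritten as $\beta' - \beta$, the quadratic expression collapses into the very quantity controlled by Euler's identity, and the only care needed is in verifying that the indices $k,l,m,n$ satisfy the hypotheses and that the boundary continuants on the right side evaluate to $1$. The proposition is essentially a repackaging of the classical fact that consecutive convergents $p/q$, $p'/q'$ of a continued fraction satisfy $pq' - p'q = \pm 1$.
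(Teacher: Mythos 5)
Your proof is correct and follows essentially the same route as the paper: both reduce $\beta^2 + [\OR{q}]^{\star}\beta + (-1)^s$ to $\ORfrak_{\!0,s-2}\,\ORfrak_{\!1,s-1} + (-1)^s$ and then invoke Euler's identity with $k=0$, $l=1$, $m=s-2$, $n=s-1$ to conclude. No substantive differences.
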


\begin{proof}
Write $\OR{q} = q_0, \ldots, q_{s-1}$ and recall that $\beta = \ORfrak_{\! 1,s-1}$ and $\left[ \OR{q} \right]^{\star} = \ORfrak_{\! 0,s-2} - \ORfrak_{\! 1,s-1} = \ORfrak_{\! 0,s-2} - \beta$.  Then
\begin{equation*}
	\beta^2 + \left[ \OR{q} \right]^{\star} \, \beta = \ORfrak_{\! 0,s-2}  \ORfrak_{\! 1,s-1} 
\end{equation*}
The identity 
\begin{equation*}
	 \ORfrak_{\! 0,s-1} \ORfrak_{\! 1,s-2} - \ORfrak_{\! 0,s-2}  \ORfrak_{\! 1,s-1} = (-1)^{s}
\end{equation*}
expresses a well-known relationship between successive convergents of a continued fraction and can be obtained as the specialization of Euler's continuant identity with $k=0$, $l=1$, $m=s-2$, and $n=s-1$.  Noting that $\alpha = \ORfrak_{\! 0,s-1}$, the proposition follows. 
\end{proof}

\begin{remark}
The proof is valid regardless of which of the two continued fraction expansions we choose for $\alpha/\beta$.  From them we get the two unique congruences $x^2 + mx + 1 \equiv 0 \pmod{\alpha}$ and $x^2 + nx - 1 \equiv \pmod{\alpha}$ satisfied by $\beta$. 
\end{remark}

The following lemma puts a bound on $\left| [\ORfrak ]^{\star} \right|$.  The convention about which continued fraction expansion to choose now becomes important -- the bound always holds under our convention.  That bound, in turn, is a crucial ingredient in the proof of the Main Theorem.

\begin{proposition}\label{P:anticontinuantsize}
If $\OR{q} = q_0, \ldots, q_{s-1}$ is a sequence of positive integers with either $q_0, q_{s-1} \geq 2$ or $q_0 = q_{s-1} = 1$, then $\left| \left[ \OR{q} \right]^{\star} \right| < \tfrac{1}{2} \ORfrak_{\! 0,s-1}$.
\end{proposition}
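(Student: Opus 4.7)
Set $A := \ORfrak_{0,s-2}$ and $B := \ORfrak_{1,s-1}$, so that $[\OR{q}]^{\star} = A - B$ by definition, and let $C := \ORfrak_{0,s-1}$. My plan is to split the argument according to the two permitted configurations of $(q_0, q_{s-1})$, which behave quite differently.

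First, assume $q_0, q_{s-1} \geq 2$. The continuant recursion applied at the right end gives $C = q_{s-1} A + \ORfrak_{0,s-3}$; combining $q_{s-1} \geq 2$ with the fact that $\ORfrak_{0,s-3} \geq 1$ whenever $s \geq 3$ yields $A \leq (C-1)/2 < C/2$, and the mirror relation $C = q_0 B + \ORfrak_{2,s-1}$ gives $B < C/2$ by the same argument. Since $A, B \in (0, C/2)$, we get $A - B \in (-C/2, C/2)$, which is the claim. The small cases $s = 1, 2$ are handled by direct computation.

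Next, assume $q_0 = q_{s-1} = 1$. Now the same two recursions only produce $A + \ORfrak_{0,s-3} = C$ and $B + \ORfrak_{2,s-1} = C$; each of $A, B$ can individually be nearly as large as $C$, so the bound on $|A - B|$ must come from a cancellation. Subtracting these equalities gives $A - B = \ORfrak_{2,s-1} - \ORfrak_{0,s-3}$, and applying the recursion to strip off the initial $1$ from $\ORfrak_{0,s-3}$ and the terminal $1$ from $\ORfrak_{2,s-1}$ simplifies this to
\[
A - B = \ORfrak_{2,s-2} - \ORfrak_{1,s-3}, \qquad C = T + U + V + W,
\]
where $T := \ORfrak_{1,s-2}$, $U := \ORfrak_{2,s-2}$, $V := \ORfrak_{1,s-3}$, and $W := \ORfrak_{2,s-3}$. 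Assuming without loss of generality $U \geq V$, the target $2(U - V) < T + U + V + W$ rearranges to $U < T + 3V + W$, which follows because $U \leq T$ (immediate from the recursion for $T$) while $3V + W > 0$ once $s \geq 4$. The degenerate cases $s \leq 3$ collapse to $U = V$, so $A = B$ outright.

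The main obstacle is the second case. Unlike the first, neither $A$ nor $B$ is individually bounded by $C/2$, so the proof must expose the common sub-continuant $\ORfrak_{2,s-3}$ that drops out of $A - B$ and then match the shortened difference $U - V$ against a decomposition of $C$ in which the interior continuant $T$ dominates.
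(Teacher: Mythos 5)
Your proof is correct, and while your first case coincides with the paper's argument, your second case takes a genuinely different route. In the case $q_0, q_{s-1} \geq 2$ you do exactly what the paper does: the identities $\ORfrak_{0,s-1} = q_0\ORfrak_{1,s-1} + \ORfrak_{2,s-1} = q_{s-1}\ORfrak_{0,s-2} + \ORfrak_{0,s-3}$ force both $\ORfrak_{0,s-2}$ and $\ORfrak_{1,s-1}$ into $\left(0, \tfrac12\ORfrak_{0,s-1}\right)$, so their difference is small. In the case $q_0 = q_{s-1} = 1$ the paper simply runs the mirror image of this: the same two identities now show that each of $\ORfrak_{0,s-2}$ and $\ORfrak_{1,s-1}$ equals $\ORfrak_{0,s-1}$ minus a \emph{strictly smaller} continuant, hence lies in $\left(\tfrac12\ORfrak_{0,s-1}, \ORfrak_{0,s-1}\right)$, and two numbers in an interval of length $\tfrac12\ORfrak_{0,s-1}$ again differ by less than that. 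You instead cancel the common sub-continuant $\ORfrak_{2,s-3}$ to get the exact value $[\OR{q}]^{\star} = \ORfrak_{2,s-2} - \ORfrak_{1,s-3}$ and bound it against the decomposition $\ORfrak_{0,s-1} = \ORfrak_{1,s-2} + \ORfrak_{2,s-2} + \ORfrak_{1,s-3} + \ORfrak_{2,s-3}$ using $\ORfrak_{2,s-2}, \ORfrak_{1,s-3} \leq \ORfrak_{1,s-2}$. The paper's route is shorter and makes the two cases visibly dual; yours costs a longer computation but exhibits precisely where the cancellation comes from and yields the closed form for $[\OR{q}]^{\star}$ in this case as a by-product. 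One small polish: your ``without loss of generality $U \geq V$'' deserves a word of justification --- either invoke the reversal identity $\OLfrak_{s-1,0}^{\star} = -\ORfrak_{0,s-1}^{\star}$, which swaps $U$ and $V$ while fixing $T$, $W$, and $\ORfrak_{0,s-1}$, or simply note that $V \leq T$ also follows from the recursion, so the opposite case is handled identically.
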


\begin{proof}
If $s=1$ the lemma is immediate, so suppose $s \geq 2$.  Euler's continuant identity with $k=m=0$, $l=1$, and $n=s-1$ shows that $\ORfrak_{\! 0,s-1} = q_0 \ORfrak_{\! 1,s-1} + \ORfrak_{\! 2,s-1}$.  This and the recursive definition of continuant give
\begin{align*}
	0 < \ORfrak_{\! 1,s-1} &= \frac{1}{q_0} \left( \ORfrak_{\! 0,s-1}- \ORfrak_{\! 2,s-1} \right) <  \frac{1}{2} \ORfrak_{\! 0,s-1} \\
	0 < \ORfrak_{\! 0,s-2} &= \frac{1}{q_{s-1}} \left(\ORfrak_{\! 0,s-1}- \ORfrak_{\! 0,s-3} \right) < \frac{1}{2} \ORfrak_{\! 0,s-1}
\end{align*}	
The lemma follows if we recall that $\left[ \OR{q} \right]^{\star} = \ORfrak_{\! 0,s-2}- \ORfrak_{\! 1,s-1}$.

Otherwise, if $q_0 = q_{s-1} = 1$, then the same identities show that
\begin{align*}
	\frac{1}{2} \ORfrak_{\! 0,s-1} &= \frac{1}{2} \left( \left[ q_1, \ldots, q_{s-2}, 1 \right] + \left[ q_2, \ldots, q_{s-2}, 1 \right] \right) < \left[ q_1, \ldots, q_{s-2}, 1 \right] < \ORfrak_{\! 0,s-1}\\
	\frac{1}{2} \ORfrak_{\! 0,s-1} &= \frac{1}{2} \left( \left[ 1, q_1, \ldots, q_{s-2} \right] + \left[ 1, q_1, \ldots, q_{s-3} \right] \right) < \left[ 1, q_1, \ldots, q_{s-2} \right] < \ORfrak_{\! 0,s-1}
\end{align*}
The lemma follows since  $\left[ \OR{q} \right]^{\star} = \left[ 1, q_1, \ldots, q_{s-2} \right] - \left[ q_1, \ldots, q_{s-2}, 1\right]$.  
\end{proof}

\section{Proof of the Main Theorem}\label{S:proof}

The proof of the Main Theorem almost falls out of the properties in the previous section.  If $\beta$ is a root of $x^2 + nx + (-1)^s \equiv 0 \pmod{p}$, then Proposition \ref{P:CFcongruence} shows it is also a root of $x^2 + \left[ \OR{q} \right]^{\star} + (-1)^s \equiv 0 \pmod{\alpha}$, in which $\OR{q}$ is the sequence of quotients of the continued fraction for $\alpha/\beta$.  But this forces $n$ and $\left[ \OR{q} \right]^{\star}$ to be congruent modulo $\alpha$.  If $\alpha$ is large enough, then the bound from Proposition \ref{P:anticontinuantsize} forces them to be equal and the theorem follows from Proposition \ref{P:finitetypes}.  The only problem with this argument is that the application of Proposition \ref{P:CFcongruence} assumes that the length of $\OR{q}$ is $s$.  While this usually can be arranged, the application of Proposition \ref{P:anticontinuantsize} requires that $\OR{q}$ is chosen with our convention, so the flexibility is gone.  The next two lemmas show that the flexibility is, outside of a finite number of cases, not needed. 

The first lemma has inherent interest.  It explains by itself the parities of the lengths of the quotient sequences appearing in Examples 1 and 2 in Section \ref{S:intro}.
\begin{lemma}\label{L:source}
Let $u$ and $v$ be relatively prime integers, $u > v > 0$.  Let $v^{-1}$ be the smallest positive inverse of $v$ modulo $u$.  The length of the simple continued fraction expansion of $u/v$ is odd if $v$ and $v^{-1}$ are on the same side of $u/2$ and even if they are on opposite sides. (Recall the convention that the continued fraction expansion is chosen so that the final quotient is 1 if and only if the initial quotient is 1.)
\end{lemma}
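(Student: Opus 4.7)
The plan is to express $v^{-1}$ in terms of the continuants of the expansion and then use inequalities from the proof of Proposition \ref{P:anticontinuantsize} to locate $v$ and $v^{-1}$ relative to $u/2$. Writing $u/v = [q_0, \ldots, q_{s-1}]$ under our convention, we have $u = \ORfrak_{\!0,s-1}$ and $v = \ORfrak_{\!1,s-1}$. Euler's continuant identity applied with $k=0$, $l=1$, $m=s-2$, $n=s-1$ (the same specialization used in the proof of Proposition \ref{P:CFcongruence}) yields
\begin{equation*}
\ORfrak_{\!0,s-1}\,\ORfrak_{\!1,s-2} - \ORfrak_{\!0,s-2}\,\ORfrak_{\!1,s-1} = (-1)^s.
\end{equation*}
Reducing modulo $u$ gives $\ORfrak_{\!0,s-2}\,v \equiv (-1)^{s+1} \pmod{u}$, so $v^{-1} \equiv (-1)^{s+1}\ORfrak_{\!0,s-2} \pmod{u}$. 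Since $0 < \ORfrak_{\!0,s-2} < u$, the smallest positive inverse of $v$ modulo $u$ is $\ORfrak_{\!0,s-2}$ when $s$ is odd and $u - \ORfrak_{\!0,s-2}$ when $s$ is even.

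Next, I would observe that the proof of Proposition \ref{P:anticontinuantsize} really establishes something stronger than its stated bound: under our convention on continued fraction expansions, the two continuants $\ORfrak_{\!0,s-2}$ and $\ORfrak_{\!1,s-1} = v$ always lie on the same side of $u/2$. Indeed, if $q_0, q_{s-1} \geq 2$ then both are strictly less than $u/2$, and if $q_0 = q_{s-1} = 1$ then both are strictly greater than $u/2$. The convention forces one of these two alternatives to hold (with $q_0 = 1$ precisely when $v > u/2$).

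Combining these two observations concludes the argument. When $s$ is odd, $v^{-1} = \ORfrak_{\!0,s-2}$ sits on the same side of $u/2$ as $v$. When $s$ is even, $v^{-1} = u - \ORfrak_{\!0,s-2}$ sits on the opposite side of $u/2$ from $\ORfrak_{\!0,s-2}$, hence on the opposite side from $v$. The degenerate case $s=1$ forces $v = 1$ and $v^{-1} = 1$, consistent with the statement since $s=1$ is odd.

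The main obstacle, I expect, is not algebraic but conceptual: recognizing that the convention on which continued fraction expansion to choose is precisely what is needed to pin $\ORfrak_{\!0,s-2}$ and $\ORfrak_{\!1,s-1}$ onto the same half of the interval $(0,u)$. Without the convention, the raw bound $|[\OR{q}]^{\star}| < u/2$ on its own would not suffice, since both continuants could straddle $u/2$ at close range.
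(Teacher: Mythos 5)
Your proof is correct, but it is a genuinely different argument from the one in the paper. The paper first reduces to the case $v < u/2$ via the symmetry $v \mapsto u-v$ and then inducts on the length of the expansion, tracking how the smallest positive inverse transforms under one step of the Euclidean algorithm (via the relation $k = v - u^{-1}$), with a separate branch for $q_1 = 1$; this is somewhat delicate bookkeeping. You instead identify $v^{-1}$ in closed form: the determinant identity $\ORfrak_{\!0,s-1}\ORfrak_{\!1,s-2} - \ORfrak_{\!0,s-2}\ORfrak_{\!1,s-1} = (-1)^s$ gives $v^{-1} = \ORfrak_{\!0,s-2}$ for $s$ odd and $v^{-1} = u - \ORfrak_{\!0,s-2}$ for $s$ even, and then the one-sided inequalities actually established inside the proof of Proposition \ref{P:anticontinuantsize} (not just its stated conclusion, which as you note would be too weak) place $\ORfrak_{\!0,s-2}$ and $v$ strictly on the same side of $u/2$ in each of the two cases the convention allows. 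This avoids induction entirely, reuses machinery already in the paper, and makes transparent exactly where the convention enters. The only caveat is the degenerate modulus $u=2$, where the convention fails to single out one of $[2]$ and $[1,1]$ and the strict inequalities in Proposition \ref{P:anticontinuantsize} degenerate to equalities; the paper disposes of this case by inspection ($v = u/2$ forces $u=2$, $v=1$) and you should do the same explicitly rather than folding it into the $s=1$ remark. With that one sentence added, your argument is complete and arguably cleaner than the published one.
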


\begin{proof}
By ``same side'', we mean $v$ and $v^{-1}$ are either both $\leq u/2$ or both $> u/2$. We first reduce to the case where $v < u/2$.  If $v=u/2$, then $u=2$ and $v=1$ and the lemma is clear.   If $v > u/2$, then the continued fraction expansion of $u/v$ starts with $1$, and our convention has us choose the one that ends in $1$ as well.  Suppose the sequence of quotients is $(1, q_1, \ldots, q_{s-2}, 1)$. The sequence of quotients of $u/(u-v)$ is then $(q_1+1, q_2, \ldots, q_{s-2}+1)$.  The lengths of both sequences thus have the same parity. Observing that the inverse of $u-v$ is $u-v^{-1}$, we see that $v$ and $v^{-1}$ are on the same side of $u/2$ if and only if $u-v$ and $(u-v)^{-1}$ are. The reduction is complete.

To show the lemma holds when $v < u/2$, we use induction on the length of the continued fraction expansion of $u/v$. If the length is $1$, then $v=1$ and the lemma is immediate. Assume the proposition is true whenever the length is s. Fix a pair $u$, $v$ for which the sequence of quotients is $(q_0, \ldots, q_s)$.  Let $k$ be the integer for which $v v^{-1} = 1 + ku$.  Since $v^{-1} < u$, we have $k < v$.  It follows that $k = v - u^{-1}$, where $u^{-1}$ is the smallest positive inverse of $u$ modulo $v$.     

By assumption, $q_0 \neq 1$, so by our convention $q_s \neq 1$ as well.  Assume for now that $q_1 \neq 1$.  Then $u = q_0 v + r$ where $v/r$ has continued fraction expansion with sequence of quotients $(q_1, \ldots, q_s)$ and $r  \leq v/2$. The equation $u = q_0 v + r$ shows that $u^{-1}$ is also the smallest positive inverse of $r$ modulo $v$. If $s$ is odd, then by the induction hypothesis $u^{-1} \leq \frac{v}{2}$, so $k \geq \tfrac{v}{2}$.  It follows that $v^{-1} > \tfrac{u}{2}$ while the continued fraction expansion of $u/v$ has even length.  Similarly, if $s$ is even, then $k < \tfrac{v}{2}$. It follows that $v^{-1} < \tfrac{1}{v} + \tfrac{u}{2}$.  But the continued fraction expansion of $u/v$ takes $s+1 \geq 2$ steps, so $v \geq 2$.  We conclude that $v^{-1} \leq \tfrac{u}{2}$ while the continued fraction expansion of $u/v$ has odd length.

If $q_1 = 1$, then the previous paragraph must be modified.  Since $q_0$ and $q_s$ are greater than $1$, we must have $s \geq 2$.  The continued fraction expansion of $v/r$ must by our convention be chosen to be $(1,q_2,\ldots, q_s-1,1)$.  Also, $r > v/2$.  If $s$ is odd, then by the  induction hypothesis $u^{-1} \leq v/2$, and if $s$ is even, then $u^{-1} > v/2$.  The rest of the argument follows as before.
\end{proof}

\begin{lemma}\label{L:finiteset}
Fix integers $n$ and $s$ with $s = 0$ or $1$. When $s=0$, assume also that $n \neq \pm 2$.  For all $\alpha$ outside a finite set of positive integers, every solution $\beta$ of the congruence $x^2 + nx + (-1)^s \equiv 0 \pmod{\alpha}$ with $0 < \beta < \alpha$ is such that the parity of the length of the continued fraction expansion of $\alpha/\beta$ is equal to that of $s$.
\end{lemma}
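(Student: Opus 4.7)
The plan is to use Lemma~\ref{L:source} to convert the parity question into a statement about the positions of $\beta$ and its least positive inverse $\beta^* \pmod\alpha$, and then to rule out the ``wrong'' configuration via completing the square.

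From $\beta^2 + n\beta + (-1)^s \equiv 0 \pmod\alpha$ I have $\beta(\beta+n) \equiv (-1)^{s+1} \pmod\alpha$, so $\beta^* \equiv (-1)^{s+1}(\beta + n) \pmod\alpha$. Provided $\alpha > |n|$ and $\beta$ is not within distance $|n|$ of $0$ or $\alpha$, this reduces to the closed form $\beta + \beta^* = \alpha - n$ when $s=0$, and $\beta^* - \beta = n$ when $s=1$. Each $\beta$ within distance $|n|$ of the boundary forces $\alpha$ to divide an integer of absolute value $O(n^2)$, so those exceptional $\beta$ contribute only finitely many $\alpha$. I therefore restrict to generic $\beta$.

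By Lemma~\ref{L:source}, the length $t$ of the continued fraction expansion of $\alpha/\beta$ is odd iff $\beta$ and $\beta^*$ lie on the same side of $\alpha/2$, so $t \not\equiv s \pmod 2$ exactly in a ``bad'' configuration. A short case analysis (tracking the sign of $n$) shows every bad configuration forces $r := 2\beta + n - \alpha$ to satisfy $|r| \leq |n|$. Completing the square in the original congruence gives $(2\beta + n)^2 \equiv n^2 - 4(-1)^s \pmod\alpha$; combined with $(2\beta + n)^2 \equiv r^2 \pmod\alpha$, this becomes $r^2 \equiv n^2 - 4(-1)^s \pmod\alpha$. Since $|r^2 - n^2 + 4(-1)^s| \leq 2n^2 + 4$, for $\alpha$ larger than that bound the congruence becomes the integer equation $(n-r)(n+r) = 4(-1)^s$.

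Enumerating same-parity factor pairs of $\pm 4$, the only integer solutions with $|r| \leq |n|$ are $(n,r)=(\pm 2, 0)$ for $s=0$ (excluded by hypothesis) and none for $s=1$. So the bad case is impossible outside a finite set of $\alpha$, and the lemma follows. The main obstacle will be the bookkeeping in the middle two paragraphs: I must verify the closed form for $\beta^*$ and the inequality $|r| \leq |n|$ across all sign combinations of $n$, and confirm that the $O(|n|)$ exceptional $\beta$ near the boundary jointly produce only finitely many $\alpha$.
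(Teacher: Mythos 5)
Your proposal is correct and follows essentially the same route as the paper's proof: both reduce via Lemma~\ref{L:source} to locating $\beta$ and its least positive inverse relative to $\alpha/2$, obtain the closed form $\beta^* = \pm(\beta+n) \bmod \alpha$ after discarding boundary $\beta$ and small $\alpha$, and kill the bad configuration with the congruence $r^2 \equiv n^2 - 4(-1)^s \pmod{\alpha}$ with $|r|\leq|n|$ --- which is exactly the paper's condition $\eta(\eta - 2|n|) \equiv \mp 4 \pmod{\alpha}$ rewritten by completing the square, and which fails to be an integer identity precisely because $n \neq \pm 2$. The bookkeeping you flag does check out (in particular, the hypothesis $n \neq \pm 2$ is also what guarantees the $O(n^2)$ integers arising from boundary $\beta$ are nonzero).
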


\begin{proof}
Suppose first that $s=0$.  Appealing to Lemma \ref{L:source}, we must show that there are only finitely many $\alpha$ for which  $x^2 + nx + 1 \equiv 0 \pmod{\alpha}$ has a root $\beta$ with smallest positive inverse $\beta^{-1}$ on the same side of $\alpha/2$ as $\beta$.  We will see that this is the case if we exclude the finitely many $\alpha$ for which one of the following holds:
\begin{enumerate}
	\item $\alpha \leq 2|n|$
	\item One of the congruences $\gamma(\gamma-|n|) \equiv -1 \pmod{\alpha}$ with $\gamma=1, 2, \ldots, |n|-1$ is valid
	\item One of the congruences $\eta(\eta-2|n|) \equiv -4 \pmod{\alpha}$ with $\eta = 1, \ldots, 2|n|-1$ is valid
\end{enumerate}
We are using the assumption that $n \neq \pm 2$ in saying that there are finitely many $\alpha$ for which (2) does not hold.  Note also that $\eta(\eta-2n) = -4$ only when $\eta = n \pm \sqrt{n^2-4}$.  Again, our assumption that $n \neq \pm 2$ ensures this is not an integer, so only finitely many $\alpha$ satisfy (3).

Assume for now that $\beta \leq \alpha/2$.  Since $\beta (\beta+n) \equiv -1 \pmod{\alpha}$, we see that $\beta^{-1} \equiv \alpha - \beta - n \pmod{\alpha}$. Because we have excluded $\alpha$ for which (2) holds, when $n < 0$ we have $\beta > |n|$.  Thus,  $\alpha - \beta - n < \alpha$.  Because we have also excluded $\alpha$ satisfying (1), it follows that
\begin{equation*}
	\beta^{-1} = \alpha - \beta - n
\end{equation*}
  We must show that $\beta+ n \geq \alpha/2$, so $\beta$ and $\beta^{-1}$ are on the same side of $\alpha/2$, for only finitely many $\alpha$. 

Suppose that $\beta+ n \geq \alpha/2$. In this case, $n > 0$.  Since $\beta \leq \alpha/2$, it follows that $-2 \beta$ is congruent to one of the integers in the interval $(0, 2n)$.  Suppose this integer is $\eta$.  We also have $\eta(\eta-2n) \equiv -4 \pmod{\alpha}$.   Thus, $\beta+n < \alpha/2$ since we exclude the $\alpha$ satisfying (3).

Now suppose that $\beta > \alpha/2$.  Then $\alpha-\beta$ is a root of $x^2-nx+1\equiv0 \pmod{\alpha}$ with $0 < \alpha-\beta < \alpha/2$.  Also, $(\alpha-\beta)^{-1} = \alpha - \beta^{-1}$.  The above work shows that, after excluding $\alpha$ satisfying one of conditions (1)-(3), it must be that  $\alpha- \beta$ and $\alpha-\beta^{-1}$ are on opposite sides of $\alpha/2$.  Thus $\beta$ and $\beta^{-1}$ are as well, and there are no additional values of $\alpha$ to exclude in this case.  The lemma then holds when $s=0$. 

The proof when $s=1$ is very similar.  We leave the details to the reader, noting that in this case we must exclude $\alpha$ for which one of the following holds:
\begin{enumerate}
	\item $\alpha \leq 2|n|$
	\item One of the congruences $\gamma(\gamma-|n|) \equiv 1 \pmod{\alpha}$ with $\gamma=1, 2, \ldots, |n|-1$ is valid
	\item One of the congruences $\eta(\eta-2|n|) \equiv 4 \pmod{\alpha}$ with $\eta=1, \ldots, 2|n|-1$ is valid
\end{enumerate}

\end{proof}

\begin{remark} 
The above proof indicates that for a specific $n$, it is possible to determine the finitely many exceptional $\alpha$.  Let us consider, for instance, when $\beta$ is a root of a congruence $x^2 \pm 3x - 1 \equiv 0 \pmod{\alpha}$.  The proof shows the exceptional $\alpha$ must satisfy one of the following:
\begin{itemize}
	\item $\alpha \leq 6$
	\item $\gamma(\gamma-3) \equiv 1 \pmod{\alpha}$ with $\gamma=1$ or $2$
	\item $\eta(\eta - 6) \equiv 4 \pmod{\alpha}$ for some $\eta$ in $\{ \, 1, 2, 3, 4, 5 \, \}$
\end{itemize}
The set of such $\alpha$ is $\{ \, 1, 2, 3, 4, 5, 6, 9, 12, 13 \, \}$.  We can check each of these moduli individually to see if there are any for which $x^2 \pm 3x - 1 \equiv 0 \pmod{\alpha}$ has a solution $\beta$.  We find the pairs $(\alpha,\beta) = (3,1)$, $(3,2)$, $(9,2)$, $(9,4)$,  $(9,5)$, $(9,7)$, $(13,5)$, and $(13,8)$.
\end{remark}

We are now positioned to prove the Main Theorem from Section \ref{S:intro}.

\begin{proof}[Proof of the Main Theorem]
Choose $\alpha$ to be greater than $2|n|$ and one of the $\alpha$ for which the conclusion of Lemma \ref{L:finiteset} holds.  We are thus avoiding a finite number of $\alpha$. By assumption, 
\begin{equation*}
	\beta^2 + n \beta + (-1)^s \equiv 0 \pmod{\alpha}.
\end{equation*}  If $\OR{q}$ is the sequence of quotients of the continued fraction expansion of $\alpha/\beta$, then Proposition \ref{P:CFcongruence} shows also
\begin{equation*}
	\beta^2 + \left[ \OR{q} \right]^{\star} \, \beta + (-1)^s \equiv 0 \pmod{\alpha}
\end{equation*}
These two congruences force
\begin{equation*}
	 \left[ \OR{q} \right]^{\star} \equiv n\pmod{\alpha}
\end{equation*}
By assumption, $n$ is in the interval $\left( -\alpha/2, \alpha/2 \right)$.  Lemma \ref{L:anticontinuantsize} shows that $\left[ \OR{q} \right]^{\star}$ is in the same interval.  Thus, $ \left[ \OR{q} \right]^{\star}=n$.  The theorem now follows from Proposition \ref{P:finitetypes}.
\end{proof}

\section{Loose ends}\label{S:looseends}

\subsection{Explicit Examples}

For specific $n$, we can sharpen the statement of the Main Theorem as in Examples \ref{Ex:simple} and \ref{Ex:complex} in Section \ref{S:intro}. For example, suppose that $\alpha > \beta > 0$ are integers and $\beta$ is a root of one of the congruences $x^2 \pm 4x + 1 \equiv 0 \pmod{\alpha}$.  Let $q_0, \ldots, q_{s-1}$ be the sequence of quotients of the continued fraction expansion of $\alpha/\beta$ chosen with our usual convention.  The proof of the Main Theorem shows that if $\alpha$ is outside of a finite exceptional set of moduli, we have $\left[ \OR{q} \right]^{\star}  = \pm 4$.  Moreover, it shows that the exceptional set consists of those $\alpha$ such that either $\alpha \leq 8$ or the conclusion Lemma \ref{L:finiteset}.  Working as in the remark following the lemma, we find that such $\alpha$ must be in the set $\{ \, 1, 2, 3, 4, 5, 6, 7, 8, 11, 12 \, \}$. Solving the congruences $x^2 \pm 4x + 1 \equiv 0 \pmod{\alpha}$, we find the pairs $(\alpha,\beta) = (2,1)$, $(3,1)$, $(3,2)$, $(6,1)$, $(6,5)$, $(11,3)$, $(11,4)$, $(11,7)$, and $(11,8)$.  

Suppose that $(c\phantom{c} ; \phantom{l}\OR{x})$ is the asymmetry type of a sequence with even length and anticontinuant equal to $\pm 4$.  The particular sequence 
\begin{equation*}
	c+1, \OR{x}, 1
\end{equation*}
 has this asymmetry type -- say it is the sequence of quotients of the continued fraction expansion of $\alpha/\beta$. Then $\alpha$  will be in the list of exceptional $\alpha$ in the proof of the Main Theorem.  This is because, as written, the sequence $c+1, \OR{x}, 1$ violates our convention for choosing continued fractions.  Choosing the continued fraction with odd length instead, the pair $(\alpha,\beta)$ now fails the conclusion of Lemma \ref{L:finiteset}.  Thus, \emph{we may look through the set of pairs $(\alpha,\beta)$ in the previous paragraph to find all of the possible asymmetry types of even length with anticontinuant equal to 4}.  This is an alternative to trying to determine them directly from the definition and recursion of anticontinuants, and it is the method used to construct the table below.

Computing the continued fractions of even length for $\alpha/\beta$ in the list above, we find the following correspondences between fractions $\alpha/\beta$ and sequences of quotients:
\begin{align*}
	\frac{6}{1} &\longleftrightarrow 5, 1 \qquad \frac{6}{5}  \longleftrightarrow  1,5\\
	\frac{11}{3} & \longleftrightarrow  3,1,1,1 \qquad \frac{11}{7}  \longleftrightarrow 1,1,1,3\\
	\frac{11}{4} & \longleftrightarrow  2,1,2,1  \qquad \frac{11}{8}  \longleftrightarrow  1,2,1,2
\end{align*}
We can easily check that these have anticontinuant equal to $\pm 4$. The fractions $\alpha/\beta = 2/1,$ $3/1$, and $3/2$ do not have sequences of quotients with anticontinuant equal to $\pm 4$ and so are true exceptions.  We thus find the entire list of asymmetry types with anticontinuant equal to $4$ is $(4\phantom{c} ; \phantom{c})$, $(2\phantom{c} ;  \phantom{l}1,1)$, and $(1\phantom{c} ; \phantom{l}1,2)$.

If a sequence has asymmetry type $(c\phantom{c} ; \phantom{l}\OR{x})$, let us call $c$ its \textbf{marginal asymmetry} and $\OR{x}$ its \textbf{core asymmetry}.    We may obtain the asymmetry types of sequences with anticontinuant equal to $-4$ by negating the marginal asymmetries and reversing the core asymmetry sequences of the asymmetry types with anticontinuant equal to $4$.  We have thus verified the statement made in Example \ref{Ex:complex} in Section \eqref{S:intro}.

The following table provides the asymmetry types with anticontinuant equal to a positive integer $n \leq 6$, classified by the parity of the length of the core asymmetry.  Exceptional $(\alpha,\beta)$ are those for which $\beta^2 \pm n\beta + (-1)^s \equiv 0 \pmod{\alpha}$ but $\alpha/\beta$ does not have continued fraction whose quotient sequence has anticontinuant equal to $n$.  

\newpage

\renewcommand{\arraystretch}{1.25}

\footnotesize
\vspace{-2cm}

%Include (3,1,1) and (1,1,3)?

\begin{tabular}{c c c c}
(Anticont. Value,Length)  &  Marginal Asymmetry & Core Asymmetry & Exceptional $(\alpha,\beta)$\\ \hline
(1, even) & 1 &  & None\\ \hline
(1, odd) & 1 & 1 & None\\ \hline
(2, even) & 2 &  & None\\ 
 & 1 & p,1 & \\ \hline
 (2, odd) & 1 & 2 & (2,1) \\
  & 2 & 1 & \\ \hline
  (3, even) & 3 &  & None \\ \hline
  (3, odd) & 1 & 3 & (3,1)\\
   & 3 & 1 & (3,2)\\
   & 1 & 1,1,1 & \\ \hline
  (4, even) & 4 & & (2,1)\\
    & 2 & 1,1 & (3,1) \, (3,2) \\
    & 1 & 1,2 & \\ \hline
  (4, odd) & 1 & 4 & (2,1)\\
    & 2 & 2 & (4,1), (4,3)\\
    & 4 & 1 & (5,2), (5,3) \\
    & 1 & 1,2,1 &\\
    & 1 & 2,1,1 &  \\ \hline
  (5, even) & 5 & & (3,1), (3,2)\\
   & 1 & 2,2 & (5,2), (5,3) \\
   & 2 & 2,1 & (7,1), (7,6) \\
   & 1 & 1,1,1,1 & \\ \hline
  (5, odd) & 1 & 5 & (5,1), (5,4) \\ 
    & 5 & 1 & (7,2), (7,5)\\
    & 1 & 1,3,1 & (7,3), (7,4) \\ 
    & 1 & 3,1,1 & \\
    & 1 & 2,2,1 & \\ \hline
  (6, even) & 6 & & (2,1) \\
  & 3 & 1,1 & (4,1), (4,3) \\
  & 1 & 1,3 & (7,2), (7,5) \\
  & 2 & 3,1 & (7,3), (7,4) \\
  & 1 & 3,2 & (8,3), (8,5)\\
  & 1 & 2,1,1,1 &  \\
   & 1 & 1,1,2,1 & \\ \hline
   (6, odd) &  1 & 6 & (2,1)\\
   & 2 & 3 & (3,1), (3,2)\\
   & 3 & 2 & (5,2), (5,3)\\
   & 6 & 1 &  (6,1), (6,5)\\
   & 1 & 1,4,1 & (9,2), (9,7)\\
   & 1 & 4,1,1 & (9,4), (9,5)\\
   & 2 & 1,1,1 & (10,3), (10,7)\\
   & 1 & 1,1,2 & \\
   & 1 & 2,3,1 & \\
   & 1 & 3,2,1 & 
%   (7, odd) & 1 & 7 & \\
%     & 7 & 1 & \\
%     & 1 & 1,5,1 & \\
%     & 1 & 1,2,3 & \\
%     & 1 & 1,4,2 & \\
%     & 1 & 1,3,3 & 
\end{tabular}\\[1cm]

\normalsize

\subsection{The case $n=\pm 2$ and $s=0$}

We now examine the pairs $(\alpha,\beta)$ for which $\beta$ is a root of $x^2 \pm 2x + 1 = (x\pm 1)^2 \equiv 0 \pmod{\alpha}$.  Say $(x \pm 1)^2 = \gamma \alpha$.  If we write $\alpha = b n^2$, where $b$ is the square-free part of $\alpha$, then $\gamma = b a^2$, say, and $x = ban \mp 1$.  We are thus studying fractions of the form $bn^2/(ban \mp 1)$.  Analyzing as in the beginning of this section, one finds that without exception they correspond to sequences of quotients of one of the forms
\begin{equation*}
	\begin{matrix}
		& q_0, & \ldots & q_{s-1}, & \textcolor{magenta}{q_s \pm 2},  &\textcolor{magenta}{q_s}, & q_{s-1}, & \ldots & q_0 \\
		q_0, & \ldots & q_{s-1}, & \textcolor{magenta}{q_s + 1}, & \textcolor{magenta}{x}, & \textcolor{magenta}{1},   &\textcolor{magenta}{q_s}, & q_{s-1}, & \ldots & q_0\\
		q_0, & \ldots & q_{s-1}, & \textcolor{magenta}{q_s - 1}, & \textcolor{magenta}{1}, & \textcolor{magenta}{x},   &\textcolor{magenta}{q_s}, & q_{s-1}, & \ldots & q_0\\
	\end{matrix}
\end{equation*}
in which $x$ is an arbitrary positive integer.

All three forms appear in \cite{jS1979}.  Specifically,  the author shows in the proof of Theorem 1 that continued fractions for numbers of the form $\tfrac{n^2}{an+1}$ (the case b=1 above)  have continued fractions with the first form above.  In Theorem 11 he also essentially states that the latter two forms appear as the continued fraction expansions of numbers of the form $\tfrac{ban+1}{bn^2}$, but only for a specific $b$.  In this and subsequent papers, concerning ``folded'' continued fractions, interest has been in iterating the form to create interesting infinite continued fractions.

Continued fractions for the general case $bn^2/(ban \pm 1)$ were studied in \cite{DLS}.  That work gives a criterion for determining which of the above forms is the one.  If $\alpha= bn^2$, $\beta = ban \mp 1$, 
and $d = \gcd(a,n)$, then we may factor $d^2$ from $n^2$ and from $an$ and incorporate it as a factor of $b$.  We may thus assume that $\alpha = bn^2$ and $\beta = ban \mp 1$ with relatively prime $n$ and $a$, and $a$, $b$, and $n$ are uniquely determined in this fashion.   

The sequence of quotients of $\alpha/\beta$ will have the first form above if $b=1$.  If $b \geq 2$, it has one of the other forms with $x = b-1$.  This was proved by giving explicit formulas for the remainders of the Euclidean algorithm with $\alpha$ and $\beta$ in terms of those of the Euclidean algorithm with $n$ and $a$.  Furthermore, it was shown that when $b=1$, the first remainder less than $n$ appearing is an inverse for $a$ modulo $n$, giving a new algorithm for computing inverses in modular arithmetic.  Certainly further exploration should be made of the remainders in the Euclidean algorithm with the pairs $(\alpha,\beta)$ under consideration in the present work.

	% NOTE:  The factorization of a^2+1 is P-9 in "Continuants and some decompositions into squares" by Delorme and Pineda-Villavicencio

\end{document}